\documentclass[11pt]{amsart}

\usepackage[T1]{fontenc}
\usepackage{lmodern}
\usepackage{microtype}
\usepackage{amsmath,amssymb,mathtools}
\usepackage{enumitem}
\usepackage{needspace}
\usepackage[dvipsnames]{xcolor}
\usepackage{graphicx}
\usepackage{caption}
\usepackage{tikz}
\usetikzlibrary{arrows.meta}
\usepackage{placeins}
\usepackage{hyperref}

\hypersetup{
  colorlinks=true,
  linkcolor=MidnightBlue,
  citecolor=MidnightBlue,
  urlcolor=BrickRed,
  pdftitle={Reachability under Arc Crossing Changes},
  pdfauthor={Bo Chen, Jinbo Geng and Zerui Wu}
}

\newtheorem{theorem}{Theorem}[section]
\newtheorem{lemma}[theorem]{Lemma}
\newtheorem{proposition}[theorem]{Proposition}
\newtheorem{corollary}[theorem]{Corollary}
\theoremstyle{definition}
\newtheorem{definition}[theorem]{Definition}

\newtheorem{example}[theorem]{Example}
\newtheorem*{definitionx}{Definition}
\theoremstyle{remark}
\newtheorem{remark}[theorem]{Remark}

\newcommand{\F}{\mathbb F_2}

\newcommand{\uAG}{\underline{A}_G}
\newcommand{\one}{\mathbf 1}

\tikzset{
  knotcurve/.style={
    draw=black,line width=.8pt,line cap=round,line join=round
  },
  knotbridge/.style={
    knotcurve,preaction={draw=white,line width=4.8pt}
  },
  selectedarc/.style={
    draw=ForestGreen!75!black,line width=4.8pt,
    opacity=.38,line cap=round,line join=round
  },
  crossingring/.style={
    circle,draw=ForestGreen!75!black,line width=1pt,
    minimum size=5.5mm,inner sep=0pt
  },
  changedring/.style={
    circle,draw=BrickRed,line width=1pt,
    minimum size=5.5mm,inner sep=0pt
  },
  crossing/.style={
    circle,fill=black,inner sep=0pt,minimum size=2.2mm
  }
}

\newcommand{\FigureEightSegment}[2]{%
  \draw[#1]
    plot[samples=180,smooth,domain=#2,variable=\t]
    ({(2+cos(2*\t))*cos(3*\t)},
     {(2+cos(2*\t))*sin(3*\t)});%
}

\setlist[enumerate]{topsep=3pt,itemsep=2pt}

\title[Reachability under arc crossing changes]
{Reachability under Arc Crossing Changes}

\author{Bo Chen, Jinbo Geng and Zerui Wu}

\address{School of Mathematics and Statistics, Huazhong University of Science and Technology, Wuhan, China}
\email{bobchen@hust.edu.cn}

\address{School of Mathematical Science, Zhejiang Normal University, Jinhua, China}
\email{jinbogeng@zjnu.cn}
\email{wuzerui2026@163.com}

\subjclass[2020]{Primary 57K10; Secondary 05C40, 57M15}
\keywords{knot shadow, arc crossing change, strong reachability, source state,
parity, arc-crossing state digraph, alternating diagram}

\begin{document}

\begin{abstract}
Cericola proved that every knot diagram can be transformed into an ascending
unknot diagram by arc crossing changes.  We refine this result for all
diagrams on a fixed R1-reduced classical knot shadow with more than three
crossings.  Apart from at most two
source  diagrams, any two diagrams of the same state parity
are mutually reachable.  Each source diagram reaches every
non-source diagram of its parity but cannot be reached from another
diagram.  A local five-occurrence condition on the marked Gauss word
characterizes the sources and thereby determines the full directed
reachability relation.
\end{abstract}

\maketitle

\section{Introduction}\label{sec:introduction}

Two operations in the literature are called \emph{arc crossing change}.
Kinuno's move acts on a semi-arc, whose position is fixed by the
shadow~\cite{Kinuno2022}.  Cericola's move acts on an arc between consecutive
undercrossings~\cite[Section~2.1]{Cericola2024}.   Throughout this
paper, arc crossing change means Cericola's version.

Cericola's state-dependent arcs produce a directed system. After a move the
arc partition changes, so the move need not be immediately reversible.  This
behavior does not occur for Kinuno's fixed semi-arcs.  The two operations are
indeed inequivalent in the figure-eight example of Cheng, Liao, and Song
\cite[Example~4.1]{ChengLiaoSong2025}.

Cericola proved that every knot diagram reaches an ascending unknot diagram
by a sequence of arc crossing changes~\cite[Theorem~3.1]{Cericola2024}.
That theorem guarantees an unknotting path, but it does not decide whether a
prescribed diagram is reachable, or when a change can be reversed.  We answer
these questions.

Let \(G\) be a classical knot shadow with \(s\) crossings.  A knot diagram on
\(G\) is obtained by choosing the undercrossing occurrence at every crossing.
After ordering the two occurrences of each crossing, define the \emph{parity} of the diagram as the parity of the number of second occurrences
chosen.  Every arc crossing change switches two choices, so this parity is
preserved.  Call a diagram a \(\emph{source}\) if no other diagram on \(G\)
can be changed into it by one arc crossing change.

\begin{theorem}[Reachability of knot diagrams]\label{thm:intro-diagrams}
Let \(G\) be a classical knot shadow with no Reidemeister-I curl and with
\(s>3\) crossings.  A diagram \(D\) on \(G\) can be transformed into another
diagram \(D'\) on \(G\) by a sequence of arc crossing changes if and only if
\begin{enumerate}
  \item \(D\) and \(D'\) have the same state parity, and
  \item \(D'\) is not a source.
\end{enumerate}

\end{theorem}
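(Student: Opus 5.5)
The plan is to work entirely in the state model. A diagram on \(G\) is a vector \(x\in\F^{s}\), where \(x_c=1\) means the second occurrence of crossing \(c\) is the undercrossing. An arc is a maximal stretch of the Gauss word between consecutive undercrossing occurrences. An arc crossing change on an arc \(a\) flips the choice at the two crossings bounding \(a\) (equivalently, it swaps which of their occurrences is under), so it adds a weight-two vector \(e_c+e_{c'}\) to \(x\). This gives the parity invariant immediately, which is the necessity of condition (1). Necessity of (2) is tautological from the definition of a source: if \(D'\ne D\) is a source, no move ends at \(D'\), so no nontrivial sequence ends there. (The case \(D=D'\) with the empty sequence is the trivial reachability, which I would handle by convention in the statement, i.e.\ counting only \(D\neq D'\).)

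For sufficiency I would form the arc-crossing state digraph \(\Gamma\) on \(\F^{s}\) and prove two facts. First, the non-source states of each parity form a single strongly connected component. Second, every source has an out-edge into that component. The second fact is cheap once the first holds: every diagram has at least one arc, hence at least one out-move. Its image is not a source, since it has an in-edge, and it has the same parity. So the image lies in the strongly connected class, and the source then reaches every non-source diagram of its parity.

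The main work is the first fact. I would proceed in three steps.

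\begin{enumerate}
\item Exhibit a reversible core. I would show that whenever the arc \(a\) bounded by undercrossings at occurrences \(p<q\) is changed, the new diagram has an arc whose change returns to \(x\), unless a specific local pattern occurs around \(p\) and \(q\). This is where the five-occurrence condition on the marked Gauss word should enter: the reverse move needs the two flipped occurrences to again be consecutive undercrossings, and this fails only when neighbouring occurrences are arranged in a restricted way.
\item Show that every non-source state is connected by reversible moves to an ascending diagram of its parity. This would follow Cericola's monotone unknotting strategy, checking that each step used is reversible in the sense of the first step.
\item Show that all ascending diagrams of a given parity (different basepoints and orientations) are mutually connected.
\end{enumerate}

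The R1-reduced hypothesis guarantees that no arc has the same crossing at both ends, so each move really flips two distinct crossings. The hypothesis \(s>3\) guarantees enough room to route around the local obstruction. I would check the small cases \(s\le 3\), in particular the trefoil shadow, to see that the hypothesis is needed.

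The step I expect to be hardest is proving that irreversibility is confined to sources. Concretely, if \(x\to y\) is a move with no direct reverse, I must still find a path \(y\to x\) unless \(x\) is a source. My first attempt would be to find a length-two or length-three detour \(y\to z\to x\). I would use a third arc adjacent to \(p\) or \(q\), and exploit the relation \((e_p+e_r)+(e_r+e_q)=e_p+e_q\) to simulate the reverse of the flip at \(p,q\). The existence of a suitable third arc would be a case analysis on the marked Gauss word near \(p\) and \(q\), and \(s>3\) rules out the degenerate configurations. The residual cases where no detour exists should be exactly the five-occurrence patterns that characterize sources, of which there are at most two per shadow.
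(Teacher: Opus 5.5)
Your reduction is sound and agrees with the paper. Parity gives necessity of (1), and indegree zero gives necessity of (2). Once the non-source states of each parity are known to form one SCC, every out-neighbour of a source has an in-arrow and the same parity, so it lies in that SCC. The gap is that this one substantive claim is never established, and Step~1 rests on a false premise. Reversing the change on an arc from $u_i$ to $u_j$ (your $p<q$) requires the \emph{mates} $o_i,o_j$ to be cyclically consecutive in the new selected set (Lemma~\ref{lem:predecessor}). These mates can lie anywhere in the word, so irreversibility is generic rather than confined to a restricted configuration near the arc. For example, on the R1-reduced word $0123401234$ take $\operatorname{Sel}(x)=\{0,3,4,6,7\}$ (positions). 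Changing the arc from position $0$ to position $3$ gives $\operatorname{Sel}(y)=\{4,5,6,7,8\}$. There, positions $5$ and $8$ are separated by $6,7$ on one side and by $4$ on the other, so there is no arrow $y\to x$, although $x$ is not alternating and hence not a source. Nor is the five-occurrence block a reversibility criterion: in the paper it only decides whether an \emph{alternating} state has any predecessor (Lemma~\ref{lem:alternating-pattern}). Moves out of a source have no return path simply because nothing enters a source.

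Steps~2 and~3 and the detour scheme are also only hoped for. You give no reason why Cericola's unknotting path should consist of reversible moves, or why the ascending diagrams of one parity are mutually reachable. Nor do you show that a detour $y\to z\to x$ of length two or three must exist for every irreversible move out of a non-source. A case analysis of the word near $p$ and $q$ cannot settle that last point, since the arcs of $y$ involving crossings $i$ and $j$ are governed by the selections next to $o_i$ and $o_j$.

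The missing idea is a global structure that avoids reversing arbitrary moves. The paper fixes an alternating state $\delta$ and sorts states into Hamming layers $L_k$. Inside an internal layer, the semi-arc moves are exactly token slides on the crossing digraph $D_\delta$ (arrow $i\to j$ when $\mathrm O_i$ and $\mathrm U_j$ are adjacent). This digraph is $2$-in/$2$-out with connected underlying graph, hence strongly connected, so each internal layer is strongly connected by the token-digraph theorem of Fernandes et al. Blocks $\mathrm U_a\mathrm O_b\mathrm U_c$ and $\mathrm O_a\mathrm U_b\mathrm O_c$ give arrows in both directions between $L_k$ and $L_{k+2}$ for $1\le k\le s-3$. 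A non-source alternating state joins through its incoming arrow, which is reversible by Lemma~\ref{lem:alternating-pattern}.

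A minor point: R1-reducedness is not what makes the two ends of an arc distinct crossings. That is automatic, since each crossing has exactly one selected occurrence. The paper uses R1-reducedness to exclude loops in $D_\delta$, to make the labels in those three-letter blocks distinct, and in Lemma~\ref{lem:alternating-pattern}.
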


The source diagrams admit a local description.  Mark each occurrence
in the cyclic Gauss word by \(\mathrm U\) or \(\mathrm O\), according as it
is an undercrossing or overcrossing occurrence.

\begin{theorem}[Source diagrams]\label{thm:intro-source}
A diagram on an R1-reduced classical knot shadow with more than three
crossings is a source if and only if:
\begin{enumerate}
  \item the diagram is alternating; and
  \item its marked Gauss word has no cyclic five-occurrence block
  \[
    \mathrm O_i\,\mathrm U_j\,\mathrm O_k\,\mathrm U_i\,\mathrm O_j
  \]
  with \(i,j,k\) pairwise distinct.
\end{enumerate}
In particular, there are at most two source diagrams.
\end{theorem}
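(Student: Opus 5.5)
The plan is to translate ``\(D\) is the image of a single arc crossing change'' into a condition on the marked Gauss word of \(D\), and then check that this condition fails exactly under (1) and (2). I take Cericola's move, as recalled in the introduction, to be the following. An arc of \(D_0\) is a cyclic segment of the Gauss word running from one \(\mathrm U\)-occurrence to the next \(\mathrm U\)-occurrence. Changing it switches the crossings at its two endpoints, so those two occurrences become \(\mathrm O\) and their partner occurrences become \(\mathrm U\). This matches the remark that every move switches exactly two choices; I should check it against the paper's formal definition. The endpoint crossings \(a,b\) are distinct, because both occurrences of one crossing cannot be undercrossings.

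\textbf{Step 1 (preimage criterion).} I will show that \(D\) is \emph{not} a source if and only if its cyclic marked Gauss word contains a segment
\[
  \mathrm O_a\, w\, \mathrm O_b, \qquad a\neq b,
\]
in which every occurrence of \(w\) is either an \(\mathrm O\) or the partner occurrence \(\mathrm U_a\) or \(\mathrm U_b\).

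For the forward direction, let \(D_0\to D\) along an arc with endpoints \(a,b\). In \(D_0\) the interior of the arc has only \(\mathrm O\)'s. Passing to \(D\) changes only occurrences of \(a\) and \(b\): the endpoints become \(\mathrm O\), and any partner occurrence of \(a\) or \(b\) lying in the interior becomes \(\mathrm U\). For the converse, given such a segment in \(D\), switch \(a\) and \(b\) to obtain \(D_0\). In \(D_0\) the segment is a genuine arc: its endpoints are \(\mathrm U\) and its interior consists only of \(\mathrm O\)'s. Changing that arc returns \(D\), and \(D_0\neq D\).

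\textbf{Step 2 (the source must be alternating).} Suppose \(D\) is not alternating. Then two cyclically adjacent occurrences are both \(\mathrm O\). R1-reducedness means no crossing has its two occurrences adjacent in the Gauss word, so these two \(\mathrm O\)'s belong to distinct crossings. They form a segment of the Step 1 type with \(w\) empty, so \(D\) is not a source.

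\textbf{Step 3 (alternating case).} Now let \(D\) be alternating, and take a segment \(\mathrm O_a\,w\,\mathrm O_b\). Since the word alternates, \(w\) has odd length and \(w\) begins and ends with \(\mathrm U\). The condition of Step 1 forces every \(\mathrm U\) in \(w\) to be \(\mathrm U_a\) or \(\mathrm U_b\), so \(w\) contains at most two \(\mathrm U\)'s. That leaves two shapes.
\begin{itemize}
\item If \(w\) has one letter, the segment is \(\mathrm O_a\,\mathrm U_a\,\mathrm O_b\) or \(\mathrm O_a\,\mathrm U_b\,\mathrm O_b\). Each has a crossing occurring twice in adjacent positions, which R1-reducedness forbids.
\item If \(w\) has three letters, the segment is \(\mathrm O_a\,\mathrm U_x\,\mathrm O_k\,\mathrm U_y\,\mathrm O_b\) with \(\{x,y\}=\{a,b\}\). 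Adjacency again excludes \(x=a\), so \(x=b\) and \(y=a\).
\end{itemize}
The three-letter case gives exactly \(\mathrm O_i\,\mathrm U_j\,\mathrm O_k\,\mathrm U_i\,\mathrm O_j\) with \(i=a\) and \(j=b\). Here \(k\neq i,j\), because both occurrences of \(i\) and of \(j\) are already placed elsewhere in the block.

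Combining Steps 1--3, an alternating \(D\) is a source exactly when no such five-block occurs, which is the statement.

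\textbf{Final count.} On a fixed connected shadow, an alternating diagram is determined by the marking of a single occurrence, since the marks then alternate around the word. Hence there are at most two alternating diagrams, and so at most two sources.

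I expect the main obstacle to be Step 1. It must be matched precisely with the paper's formal definition of Cericola's move, in particular how it treats partner occurrences that lie inside the arc and whether an arc may wrap around the whole knot. The hypothesis \(s>3\) does not seem to be needed for this characterization. I would check whether it enters only through the reachability theorem or through some degenerate wrap-around arcs on small shadows.
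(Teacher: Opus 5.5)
Your proposal is correct and follows the paper's proof essentially step for step. Step~1 is the predecessor criterion (Lemma~\ref{lem:predecessor}), Step~2 is Lemma~\ref{lem:source-alternating}, Step~3 is the case analysis of Lemma~\ref{lem:alternating-pattern}, and the bound of two comes from the fact that the alternating marking is unique up to complement. You are also right that \(s>3\) plays no role here, and the paper's proof does not use it either. One small gap in Step~2: the existence of two adjacent \(\mathrm O\)'s in a non-alternating word needs the equal number of \(\mathrm O\)- and \(\mathrm U\)-marks. Their distinctness, however, holds even without R1-reducedness, since each crossing has only one \(\mathrm O\)-occurrence.
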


Cheng, Liao, and Song encode the diagrams on \(G\) as the vertices of a
directed graph \(A_G\), with arrows given by single arc crossing changes
\cite[Section~4.1]{ChengLiaoSong2025}.  Theorem~\ref{thm:intro-diagrams}
says that, in either parity class, all non-source diagrams form one
strongly connected component, while each source diagram is a singleton
source component.   This
determines all ordered reachability relations on the fixed shadow, refining
Cericola's existence of an unknotting path.

The proof first translates source diagrams into the local word condition
of Theorem~\ref{thm:intro-source}, then connects all remaining states through
token digraphs (see Section~\ref{sec:non-source-scc}). 

Section~\ref{sec:states} fixes the notation. Section~\ref{sec:reverse} proves
the two theorems in terms of the state graph. 

\section{Preliminaries and notation}\label{sec:states}

Basic knot-diagram terminology follows Lickorish~\cite[Chapter~1]
{Lickorish1997}.  The description below uses the standard parameter-circle
viewpoint of Gauss diagrams~\cite[Section~1]{PolyakViro1994} and cyclic Gauss
words~\cite[Introduction and Section~5.1]{Turaev2004}.

\subsection{Unicursal shadows and occurrence words}

\begin{definition}[Shadow and occurrence word]\label{def:shadow}
A \emph{classical knot shadow} is the image \(G\subset\mathbb R^2\) of a
generic immersion \(\gamma:S^1\to\mathbb R^2\) with only transverse double
points, called \emph{crossings}.  Enumerate them as
\(x_0,\ldots,x_{s-1}\), and use the integer \(i\) as the label of \(x_i\).
Each crossing has two preimages
\[
  \gamma^{-1}(x_i)=\{p_i^0,p_i^1\}\subset S^1,
\]
called its two \emph{occurrences}.  Orient \(S^1\) and traverse it from a
basepoint outside these \(2s\) occurrences.  Label the two preimages so that
\(p_i^0\) is met first and \(p_i^1\) second; these are the \emph{first} and
\emph{second occurrences} of \(i\).  If \(q_r\), \(0\le r<2s\), is the
\(r\)-th preimage met, then \(r\) is its \emph{occurrence position}.
Recording the label of
\(\gamma(q_r)\) gives
\begin{equation}\label{eq:word}
  W=w_0w_1\cdots w_{2s-1},  w_j\in \{0,1,\cdots, s-1\}.
\end{equation}
Every label occurs twice.  We regard \(W\) cyclically, so changing the
basepoint rotates it; changing the labels relabels its letters.  
Position indices are read modulo \(2s\).
\end{definition}

The domain is one circle, so the shadow is unicursal. Link shadows are not
considered.  We assume \(s>3\). The low-crossing cases are recorded in
Remark~\ref{rem:low-crossings}.

\begin{definition}[Semi-arc]\label{def:semiarc}
The \emph{semi-arc} \(h_r\) is the image under \(\gamma\) of the oriented
interval in \(S^1\) from \(q_r\) to \(q_{r+1}\). 
\end{definition}

The \(2s\) semi-arcs depend only on the shadow.   If two cyclically
adjacent letters of \(W\) agree, the intervening semi-arc bounds an R1
curl, and we call \(G\), or \(W\), \emph{R1-reduced} when no such pair
occurs.

\subsection{States, arcs, and moves}

For each crossing \(x_i\), choose one point of
\(\gamma^{-1}(x_i)\) as its undercrossing preimage; its mate is then the
overcrossing preimage.  We call such a choice a \emph{state} \(\alpha\) and
write
\[
  \operatorname{Sel}(\alpha)
  =\{\text{undercrossing preimages selected by }\alpha\}\subset S^1.
\]
For a state \(\alpha\), define
\[
  \alpha_i=
  \begin{cases}
    0,&\text{if \(p_i^0\) is selected},\\
    1,&\text{if \(p_i^1\) is selected}.
  \end{cases}
\]
Thus \(\alpha\mapsto(\alpha_0,\ldots,\alpha_{s-1})\) is a bijection from
the state set to \(\F^s\), where \(\F=\{0,1\}\) is the \(2\)-element
field and each coordinate records which of the two occurrences is the
undercrossing.  Write \(e_i\) for the \(i\)-th standard basis vector and
\(\one=(1,\ldots,1)\).

\begin{definition}[Marked occurrence word]\label{def:marked-word}
Mark each occurrence in \(W\) by \(\mathrm U\) when it belongs to
\(\operatorname{Sel}(\alpha)\), and by \(\mathrm O\) otherwise.  The
resulting \(\mathrm O/\mathrm U\)-Gauss code is denoted by \(W_\alpha\) and called the
\emph{marked occurrence word}.  The state is \emph{alternating} when its
marks alternate cyclically.
\end{definition}

The \(\mathrm O/\mathrm U\)-Gauss code is standard
\cite[Section~3.2]{Kauffman1999}. Only the term \emph{marked occurrence word}
and the notation \(W_\alpha\) are specific to this paper.

\begin{definition}[Arc]\label{def:arc}
An \emph{arc} of \(\alpha\) is the oriented segment from one point of
\(\operatorname{Sel}(\alpha) \subseteq S^1\) to the next under the cyclical order induced by the orientation of $S^1$.  Thus the selected set cuts the
circuit into \(s\) arcs.
\end{definition}

\begin{definition}[Arc crossing change]\label{def:arc-change}
Let \(c\) be an arc of $\alpha$ with endpoint crossings \(i\) and \(j\).  An \emph{arc
crossing change} on \(c\) switches the over-under information at these two
crossings and leaves every other crossing unchanged.  Equivalently, the two
endpoint occurrences of \(c\) become overcrossings and their mates become
undercrossings.
\end{definition}

Definitions~\ref{def:arc-change} are due to
Cericola~\cite[Section~2.1]{Cericola2024}.  In state coordinates the change is
\begin{equation}\label{eq:move}
  \alpha'=\alpha+e_i+e_j.
\end{equation}
\begin{definition}[Arc-crossing state graph]\label{def:AG}
Following Cheng, Liao, and Song~\cite[Section~4.1]{ChengLiaoSong2025}, the
\emph{arc-crossing state graph} \(A_G\) has vertex set \(\F^s\).  For each
arc \(c\) of a state \(\alpha\) with endpoint crossings \(i,j\), the arc
crossing change on \(c\) gives the arrow
$
  \alpha\longrightarrow\alpha+e_i+e_j,
$
 and \(A_G\) has exactly these arrows.
\end{definition}

\begin{remark}\label{rem:checkerboard}
Cheng, Liao, and Song encode states by a checkerboard-coloring convention
rather than the first/second-occurrence convention used here.  The two encodings differ by an affine
change of binary coordinates, so Proposition~\ref{prop:encoding}
identifies the resulting directed graphs.
\end{remark}

\begin{proposition}\label{prop:encoding}
The directed-graph isomorphism type of \(A_G\) is unchanged by exchanging the
first/second convention at any crossings, relabeling crossings, cyclically
rotating the word, or reversing its orientation.
\end{proposition}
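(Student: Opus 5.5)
The key observation is that the arrows of \(A_G\) are determined intrinsically by the geometry of the parameter circle, with no reference to the bookkeeping choices. For a state \(\alpha\), the set \(\operatorname{Sel}(\alpha)\subset S^1\) determines the arcs. Each arc contributes one arrow \(\alpha\to\alpha'\), where \(\operatorname{Sel}(\alpha')\) is obtained from \(\operatorname{Sel}(\alpha)\) by replacing the two endpoint occurrences of the arc with their mates. So I will introduce the \emph{coordinate-free} state graph \(\mathcal A\): its vertices are the subsets \(S\subset S^1\) containing exactly one preimage of each crossing, and its arrows are given by this endpoint-swap rule. Each of the four operations in the statement will be shown to induce a bijection \(\F^s\to\F^s\) that factors through the identification of \(\F^s\) with the vertices of \(\mathcal A\), and hence is a digraph isomorphism.

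\textbf{Step 1: relabeling the first/second convention.} Exchanging the convention at a set \(T\) of crossings changes coordinates by \(\alpha\mapsto\alpha+\sum_{i\in T}e_i\), while \(\operatorname{Sel}\) is unchanged. The arrow rule \eqref{eq:move}, \(\alpha\mapsto\alpha+e_i+e_j\), commutes with translation. The arc, and hence the pair \(\{i,j\}\), is read off from \(\operatorname{Sel}\). So this translation is an isomorphism.

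\textbf{Step 2: relabeling crossings.} A permutation \(\sigma\) of labels permutes coordinates. It sends the endpoint pair \(\{i,j\}\) to \(\{\sigma(i),\sigma(j)\}\), and arcs are unchanged.

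\textbf{Step 3: rotating the word.} Moving the basepoint does not change \(S^1\), \(\gamma\), or \(\operatorname{Sel}\). It only changes which preimage is called first for those crossings whose two occurrences straddle the old and new basepoints. This is therefore a special case of Step 1, composed with the reindexing of positions modulo \(2s\).

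\textbf{Step 4: reversing orientation.} This is the one step needing care. Reversing the orientation of \(S^1\) keeps the undercrossing set \(\operatorname{Sel}\) fixed. Cyclically consecutive selected points remain consecutive, so each arc becomes the same segment traversed backwards, with the same two endpoints. Hence the set of unordered endpoint pairs, and so the arrow set, is preserved. Orientation reversal also reverses the first/second order (up to a basepoint shift), which is handled by Step 1. The only subtlety I expect is checking that arcs are defined by adjacency in the cyclic order rather than by an oriented "from \(p\) to the next" rule that could differ after reversal. Adjacency is symmetric, so reversal gives a bijection on arcs.

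Finally, composing these isomorphisms covers any combination of the operations. Every bijection used is an identity on \(\mathcal A\) in disguise, so the isomorphism type of \(A_G\) is invariant.
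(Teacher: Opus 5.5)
Your proposal is correct and follows the paper's argument: each operation induces a bijection of states that preserves the geometric undercrossing set \(\operatorname{Sel}(\alpha)\), and therefore preserves the arcs and the arrows. You also make explicit two points the paper's terse proof leaves implicit. Rotation and reversal change the first/second designation, which reduces them to the translation case. Reversal preserves the unordered pairs of cyclically consecutive selected points.
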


\begin{proof}
Exchanging the coordinate convention at the crossings in a set
\(S\subseteq\{0,\ldots,s-1\}\) translates every state by
\(\sum_{i\in S}e_i\).  Relabeling the crossings permutes the coordinates.
Rotation changes only the displayed starting position of the cyclic
traversal.  Reversal changes the traversal orientation.  In every case there
is a bijection between states that preserves which geometric crossing
occurrences are under.  It therefore preserves the current arcs and the
result of arrows.  The induced bijection is a
directed-graph isomorphism.
\end{proof}

Each state has \(s\) distinct successors and outdegree \(s\)
\cite[Proposition~4.2(1)]{ChengLiaoSong2025}.  A move need not be reversible.
We use standard digraph terminology~\cite[Chapter~1]
{BangJensenGutin2009}: \(\uAG\) denotes the underlying undirected graph,
an SCC is a strongly connected component, a source has indegree zero, and
\[
  \operatorname{SC}(A_G)
\]
is the directed acyclic graph (called \emph{condensation digraph} of \(A_G\)) obtained by contracting its SCCs and deleting any parallel arrows obtained in this process.  Weak connectivity refers to
connectivity in \(\uAG\).

Figure~\ref{fig:occurrence-encoding} shows one transition for the
figure-eight word \(W=01231032\).

\begin{figure}[ht]
\centering
\begin{tikzpicture}[x=.5cm,y=.5cm,>=Stealth]
  \begin{scope}[shift={(-8.3,0)}]
    \node[font=\small\bfseries] at (0,3.35) {the shadow \(G\)};
    \FigureEightSegment{draw=RoyalBlue!65,line width=4.8pt,
      opacity=.35,line cap=round}{60:120}
    \FigureEightSegment{knotcurve}{0:360}
    \FigureEightSegment{draw=RoyalBlue!80!black,line width=.9pt,
      -{Stealth}}{78:88}
    \fill[black] (-1.58,.91) circle (1.5pt);
    \node[font=\scriptsize,anchor=south east] at (-1.62, 0.5) {\(b\)};
    \node[font=\scriptsize] at (-2.05,-0.1) {\(0\)};
    \node[font=\scriptsize] at (2.05,.-0.1) {\(1\)};
    \node[font=\scriptsize] at (.3,2.82) {\(2\)};
    \node[font=\scriptsize] at (.3,-2.82) {\(3\)};
    \node[font=\scriptsize,text=RoyalBlue!80!black] at (0,-1.35)
      {semi-arc \(h_0\)};
  \end{scope}

  \draw[gray!75,-{Stealth}] (-4.9,0)--(-3.45,0)
    node[midway,above,font=\scriptsize,align=center] {choose\\\(\alpha\)};

  \begin{scope}
    \node[font=\small\bfseries] at (0,3.35)
      {\(\alpha=(0,1,0,1)\)};
    \FigureEightSegment{knotcurve}{0:360}
    \FigureEightSegment{knotbridge}{24:36}
    \FigureEightSegment{knotbridge}{114:126}
    \FigureEightSegment{knotbridge}{204:216}
    \FigureEightSegment{knotbridge}{294:306}
    \FigureEightSegment{selectedarc}{150:240}
    \FigureEightSegment{knotcurve}{150:240}
    \node[crossingring] at (0,2.5) {};
    \node[crossingring] at (1.5,0) {};
    \node[font=\scriptsize,text=ForestGreen!55!black] at (.45,1.8)
      {\(\mathrm U_2\)};
    \node[font=\scriptsize,text=ForestGreen!55!black,anchor=west]
      at (1.78,.3) {\(\mathrm U_1\)};
    \node[font=\scriptsize,text=ForestGreen!55!black] at (-2.15,-2.35)
      {\(c\)};
    \node[font=\scriptsize] at (.42,-2.82) {\(\mathrm O_3\)};
  \end{scope}

  \draw[very thick,-{Stealth}] (3.35,0)--(4.95,0)
    node[midway,above,font=\scriptsize,align=center] {change\\\(c\)};

  \begin{scope}[shift={(8.3,0)}]
    \node[font=\small\bfseries] at (0,3.35)
      {\(\alpha'=(0,0,1,1)\)};
    \FigureEightSegment{knotcurve}{0:360}
    \FigureEightSegment{knotbridge}{144:156}
    \FigureEightSegment{knotbridge}{204:216}
    \FigureEightSegment{knotbridge}{234:246}
    \FigureEightSegment{knotbridge}{294:306}
    \node[changedring] at (0,2.5) {};
    \node[changedring] at (1.5,0) {};
    \node[font=\scriptsize,text=BrickRed] at (0,-3.12)
      {crossings \(1,2\) switched};
  \end{scope}
\end{tikzpicture}
\caption{The encoding \(W=01231032\) and the arc change
\(\alpha=(0,1,0,1)\to\alpha'=(0,0,1,1)\) at crossings \(1\) and \(2\).}
\label{fig:occurrence-encoding}
\end{figure}

\section{Reachability}
\label{sec:reverse}

This section proves Theorem~\ref{thm:intro-diagrams} and
Theorem~\ref{thm:intro-source}.  The argument is entirely in terms of the
occurrence word and its selected positions.  The proof of
Theorem~\ref{thm:intro-source} occupies Subsection~\ref{sec:source-states}
and identifies the source diagrams as the source states of a directed
graph.  

For state \(\alpha=(\alpha_0,\ldots,\alpha_{s-1})\), define its
\emph{parity} by
\begin{equation}\label{eq:parity}
  p(\alpha)=\alpha_0+\cdots+\alpha_{s-1}\in\F,
\end{equation}
and set
\[
  V_\varepsilon=\{\alpha\in\F^s:p(\alpha)=\varepsilon\},
  \qquad \varepsilon\in\F.
\]
Every arrow changes two coordinates, so \(V_0\) and \(V_1\) are invariant
under arc crossing changes and there is no arrows connecting them~\cite[Proposition~4.2(3)]{ChengLiaoSong2025}.

For \(\varepsilon\in\F\), set
\[
  S_\varepsilon=\{\alpha\in V_\varepsilon:\deg^-(\alpha)=0\},
  \qquad N_\varepsilon=V_\varepsilon\setminus S_\varepsilon,
\]
where $\deg^-(\alpha) $ is the indegree of $\alpha$ in $A_G$.

We now state the state-graph form of Theorem~\ref{thm:intro-diagrams}.  

\begin{theorem}[SCC decomposition]\label{thm:strong}
Let \(W\) be the R1-reduced occurrence word of a classical shadow \(G\) with
\(s>3\) crossings.  The SCCs contained in \(V_\varepsilon\) are
\(N_\varepsilon\) and the singletons \(\{\alpha\}\),
\(\alpha\in S_\varepsilon\).  In $\operatorname{SC}(A_G)$,
\(N_\varepsilon\) is the unique sink in its parity class, and every
\(\{\alpha\}\) is a source whose only  arrow enters
\(N_\varepsilon\).  Altogether there are at most two singleton source SCCs.
\end{theorem}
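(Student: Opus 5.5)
The proof splits into a source analysis and a connectivity argument. First I would characterize the states of indegree zero. A state \(\beta\) has an in-arrow exactly when some predecessor \(\alpha=\beta+e_i+e_j\) has an arc whose endpoints are occurrences of \(i\) and \(j\). Those two endpoints become overcrossings in \(\beta\), and no point of \(\operatorname{Sel}(\alpha)\) lies strictly between them. Translated to \(W_\beta\), an in-arrow exists iff there are two positions \(a<b\), cyclically, carrying \(\mathrm O_i\) and \(\mathrm O_j\) with \(i\ne j\), such that:
\begin{itemize}
\item every occurrence strictly between them is marked \(\mathrm O\), except that the mates \(\mathrm U_i,\mathrm U_j\) may lie there;
\item the mates are not both outside in a way that would make them selected inside \(\alpha\)'s arc.
\end{itemize}
In particular, two cyclically adjacent letters both marked \(\mathrm O\) give an in-arrow immediately, because their letters differ by R1-reducedness. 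Hence a source is alternating.

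For an alternating \(\beta\), the interval from \(\mathrm O_i\) to \(\mathrm O_j\) must contain only \(\mathrm U_i,\mathrm U_j\) among its \(\mathrm U\)'s. The shortest candidates are \(\mathrm O_i\mathrm U_j\mathrm O_k\mathrm U_i\mathrm O_j\) and a three-letter pattern \(\mathrm O_i\mathrm U_i\mathrm O_j\), but the latter is excluded since the mate cannot be adjacent. So the sources are exactly the states of Theorem~\ref{thm:intro-source}. Only two states are alternating, namely \(\alpha\) and \(\alpha+\one\), which gives at most two sources.

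Second, I would show that each source has its unique SCC as a singleton. This is immediate, since it has indegree zero.

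The main work is to show that \(N_\varepsilon\) is strongly connected. Since every state has outdegree \(s\ge 4\) and each successor is a non-source, it then follows that every arrow out of a source enters \(N_\varepsilon\), and that \(N_\varepsilon\) is the unique sink. My plan is to view a state as a placement of \(s\) tokens (the selected occurrences) on the cycle of \(2s\) positions, one token per letter pair. An arc crossing change moves the two tokens bounding an arc to their mates, which is the token-digraph viewpoint announced in Section~\ref{sec:non-source-scc}. I would then find reversible moves, meaning pairs of arrows \(\alpha\to\alpha+e_i+e_j\to\alpha\), which exist whenever after the move \(\mathrm U_i,\mathrm U_j\) again bound an arc. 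From these I would show that every non-source state reaches a canonical state in its parity class, such as the ascending state with a fixed top, and that the canonical state reaches every non-source state.

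For the first direction I would adapt Cericola's ascending argument within the fixed shadow. For the second, I would induct on the number of disagreements with the target, using \(s>3\) to find an auxiliary crossing \(k\) so that \(e_i+e_j=(e_i+e_k)+(e_k+e_j)\) can be realized through arcs.

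I expect the main obstacle to be this last step: controlling the state-dependent arc partition so that the required pairs \(\{i,j\}\) really occur as arcs, while avoiding landing in a source. I would first try to prove that the undirected graph on \(N_\varepsilon\) generated by reversible moves is connected, with transpositions \(e_i+e_j\) for all pairs generating \(V_\varepsilon\). This would likely require a case analysis on the local pattern of \(W\) around a chosen letter, and this is where R1-reducedness and \(s>3\) enter.
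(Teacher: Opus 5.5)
Your source analysis agrees with the paper. An adjacent pair \(\mathrm O_i\mathrm O_j\) gives a predecessor, so sources are alternating. For an alternating state the interval from \(o_i\) to \(o_j\) can contain only \(u_i,u_j\), and R1-reducedness leaves only the five-occurrence block. Once \(N_\varepsilon\) is known to be strongly connected, the condensation statements follow as you say.

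The gap is that strong connectivity of \(N_\varepsilon\), which is the real content of the theorem, is not proved. You list strategies and yourself name the decisive step as an open obstacle, and none of them closes it as stated:
\begin{enumerate}
\item The identity \(e_i+e_j=(e_i+e_k)+(e_k+e_j)\) is only algebraic. Each summand must be the endpoint pair of a current arc, and the arc partition changes after every move, so the induction on disagreements has no mechanism. The fact that the vectors \(e_i+e_j\) generate the even-weight subspace is beside the point for the same reason.
\item Adapting Cericola covers at most one direction. Even there you would need to reach one prescribed state in each parity class, which his theorem as stated does not provide. The other direction, that a canonical state reaches \emph{every} non-source, is exactly where you have no argument.
\item Connectivity of the reversible-move graph is likewise unproven.
\end{enumerate}
Your worry about landing in a source is vacuous: a source has indegree zero, so no move ever enters one.

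The missing idea is to measure states from a fixed alternating state \(\delta\). Write \(\delta_S=\delta+\sum_{i\in S}e_i\) and layer states by \(k=|S|\).
\begin{enumerate}
\item Suppose \(\mathrm O_i\) and \(\mathrm U_j\) are adjacent in \(W_\delta\). The semi-arc between them is an arc of \(\delta_S\) exactly when \(i\in S\) and \(j\notin S\), and changing it gives \(\delta_{S\setminus\{i\}\cup\{j\}}\).
\item Hence the semi-arc moves inside the layer \(|S|=k\) form the token digraph \(T_k(D_\delta)\). Here the crossing digraph \(D_\delta\) has an arrow \(i\to j\) for each adjacent pair \(\mathrm O_i,\mathrm U_j\).
\item \(D_\delta\) has in- and outdegree \(2\) at every vertex and a connected underlying graph, so it is strongly connected. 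By the token-digraph theorem of Fernandes et al., every \(T_k(D_\delta)\) is then strongly connected. These moves are typically one-way, so the connectivity comes from this balanced structure, not from reversibility.
\item Blocks \(\mathrm U_a\mathrm O_b\mathrm U_c\) and \(\mathrm O_a\mathrm U_b\mathrm O_c\) in \(W_\delta\) give arrows from layer \(k\) to layer \(k+2\) and back, for \(1\le k\le s-3\). This is where \(s>3\) is used. So the internal states of each parity form one SCC.
\item A non-source alternating state has in- and out-neighbours that differ from it in exactly two coordinates. These lie in layer \(2\) or \(s-2\), both internal, which attaches the state to that SCC.
\end{enumerate}
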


We prove the theorem by first recognizing the source states locally and then
connecting the remaining states through token digraphs.

\subsection{One-step predecessors}

Fix a state \(\alpha\).  At crossing \(i\), denote the occurrence in
\(\operatorname{Sel}(\alpha)\) by \(u_i\) and its mate by \(o_i\).
Two points in a finite subset of the oriented circle are \emph{cyclically
consecutive} if they are adjacent when that subset is listed in cyclic order.

For two distinct labels \(i\) and \(j\), the only state that could reach \(\alpha\)
by changing those crossings is \(\alpha^{ij}=\alpha+e_i+e_j\), with
\begin{equation}\label{eq:candidate-set}
  \operatorname{Sel}(\alpha^{ij})
  =\bigl(\operatorname{Sel}(\alpha)\setminus\{u_i,u_j\}\bigr)
   \cup\{o_i,o_j\}.
\end{equation}

\Needspace{6\baselineskip}
\begin{lemma}[Predecessor criterion]\label{lem:predecessor}
For distinct labels \(i,j\), there is an arrow
\[
  \alpha^{ij}\longrightarrow\alpha
\]
if and only if \(o_i,o_j\) are cyclically consecutive in
\(\operatorname{Sel}(\alpha^{ij})\).
\end{lemma}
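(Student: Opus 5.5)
The plan is to unwind Definition~\ref{def:AG} directly for the state \(\beta=\alpha^{ij}\). By \eqref{eq:move}, the arrows leaving \(\beta\) are exactly \(\beta\to\beta+e_a+e_b\), where \(\{a,b\}\) is the pair of endpoint crossings of some arc of \(\beta\). Since \(\beta+e_i+e_j=\alpha\), the arrow \(\beta\to\alpha\) exists if and only if some arc of \(\beta\) has endpoint crossing set \(\{i,j\}\). Note that \(e_a+e_b=e_i+e_j\) with \(a\ne b\) forces \(\{a,b\}=\{i,j\}\). The only subtlety in this translation is that an arc could have both endpoints at the same crossing, so that \(e_a+e_b=0\). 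This cannot happen, however: \(\operatorname{Sel}(\beta)\) contains exactly one preimage of each crossing, so the two endpoints of any arc belong to different crossings.

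Next I identify which points of \(\operatorname{Sel}(\beta)\) lie over \(i\) and \(j\). By \eqref{eq:candidate-set}, the unique selected preimage of crossing \(i\) in \(\beta\) is \(o_i\), and that of \(j\) is \(o_j\). By Definition~\ref{def:arc}, an arc of \(\beta\) is the oriented segment between two cyclically consecutive points of \(\operatorname{Sel}(\beta)\). Its endpoint crossings are the crossings lying under those two selected points. Hence an arc with endpoint crossings \(\{i,j\}\) exists exactly when \(o_i\) and \(o_j\) are cyclically consecutive in \(\operatorname{Sel}(\beta)\), in one order or the other. This gives both directions at once.

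I should also check that the criterion does not depend on orientation. If \(o_j\) follows \(o_i\), the arc runs from \(o_i\) to \(o_j\); if \(o_i\) follows \(o_j\), it runs the other way. Either way the move changes \(\{i,j\}\), so the order does not matter. One further degenerate case deserves a remark: when \(s=2\), every pair is consecutive. This is harmless here, and in any case \(s>3\). I expect no real obstacle. The only care needed is in the translation from \emph{arc with endpoint crossings \(i,j\)} to \emph{consecutive selected points \(o_i,o_j\)}, which relies on each crossing contributing exactly one selected point.
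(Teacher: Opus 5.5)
Your proof is correct and follows the paper's own one-line argument: both identify the arrow \(\alpha^{ij}\to\alpha\) with the existence of an arc of \(\alpha^{ij}\) bounded by the selected points \(o_i\) and \(o_j\). You also spell out what the paper leaves implicit, namely that \(e_a+e_b=e_i+e_j\) forces \(\{a,b\}=\{i,j\}\) and that each crossing has exactly one selected preimage.
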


\begin{proof}
The two occurrences are consecutive selections precisely when they bound an
arc of \(\alpha^{ij}\). Changing that arc gives \(\alpha\).
\end{proof}

For \(i<j\), let \(m_{ij}(\alpha)=1\) when \(o_i,o_j\) are cyclically
consecutive in \(\operatorname{Sel}(\alpha^{ij})\), and set
\(m_{ij}(\alpha)=0\) otherwise.
Lemma~\ref{lem:predecessor} gives the exact formula
\begin{equation}\label{eq:indegree}
  \deg^-(\alpha)=\sum_{0\le i<j<s}m_{ij}(\alpha).
\end{equation}

\begin{remark}[Correction to the earlier indegree formula]
Cheng, Liao, and Song state that the indegree is
\(\sum_a\binom{n_a}{2}\), where \(n_a\) is the number of 
overcrossings on an arc \(a\) of $\alpha$
\cite[Proposition~4.2(2)]{ChengLiaoSong2025}.  This counts only pairs
\(o_i,o_j\) lying on the same  arc in $\alpha$.  Lemma~\ref{lem:predecessor} shows
that a predecessor can also arise when deleting \(u_i,u_j\) makes
\(o_i,o_j\) consecutive across an arc boundary.  Hence the stated formula
does not give the full indegree in general.  For the figure-eight word
\(W=01231032\) and the state
\(\alpha=(1,0,0,0)\), its four target arcs contain \(0,0,1,3\)
overcrossing occurrences respectively.  The stated expression is therefore \(3\), whereas
Lemma~\ref{lem:predecessor} gives four predecessors ($\alpha^{01}\to \alpha$ is allowed) and hence
\(\deg^-(\alpha)=4\).
\end{remark}

\begin{center}
\begin{tikzpicture}[scale=.7]
  \draw[thick] (0,0) circle (1);
  \draw[->, thick, >=stealth] (-60:1) arc (-60:-82:1);
  \foreach \a in {0,-45,...,-315} { \fill (\a:1) circle (1.5pt); }
  \node[anchor=west]       at (0:1)    {$p_0^0$};
  \node[anchor=north west] at (-45:1)  {\textcolor{red}{$p_1^0$}};
  \node[anchor=north]     at (-90:1)  {\textcolor{red}{$p_2^0$}};
  \node[anchor=north east] at (-135:1) {\textcolor{red}{$p_3^0$}};
  \node[anchor=east]      at (-180:1) {$p_1^1$};
  \node[anchor=south east] at (-225:1) {\textcolor{red}{$p_0^1$}};
  \node[anchor=south]     at (-270:1) {$p_3^1$};
  \node[anchor=south west] at (-315:1) {$p_2^1$};
\end{tikzpicture}
\captionof{figure}{The marked points are the preimages of the crossings;
the red points are the points of \(\operatorname{Sel}(\alpha)\).}
\end{center}

\subsection{Source states}\label{sec:source-states}

The predecessor criterion reduces indegree zero to two local conditions.

\begin{lemma}[Every source is alternating]\label{lem:source-alternating}
If a state \(\alpha\) has indegree zero, then \(\alpha\) is alternating.
\end{lemma}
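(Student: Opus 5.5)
The plan is to prove the contrapositive: if \(\alpha\) is not alternating, then Lemma~\ref{lem:predecessor} produces an explicit predecessor of \(\alpha\). The argument has three steps: a counting step, a check that the two crossings involved are distinct, and a check that the two occurrences become consecutive selections.

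\emph{Step 1 (counting).} A non-alternating state has two cyclically adjacent \(\mathrm O\)'s. The marked word \(W_\alpha\) is a cyclic word of length \(2s\) with exactly \(s\) letters \(\mathrm U\) and \(s\) letters \(\mathrm O\), since each crossing contributes one of each. Suppose no two cyclically adjacent letters are both \(\mathrm O\). Then every \(\mathrm O\) is immediately followed by a \(\mathrm U\). The map sending an \(\mathrm O\) at position \(r\) to the \(\mathrm U\) at position \(r+1\) is injective, so it is a bijection onto the \(s\) letters \(\mathrm U\). Hence every \(\mathrm U\) is immediately preceded by an \(\mathrm O\), so no \(\mathrm{UU}\) occurs either, and the marks alternate. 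Consequently, if \(\alpha\) is not alternating, there is a position \(r\) with both \(q_r\) and \(q_{r+1}\) marked \(\mathrm O\).

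\emph{Step 2 (distinct crossings).} Let \(q_r\) and \(q_{r+1}\) be occurrences of the crossings \(i\) and \(j\). Then \(q_r=o_i\) and \(q_{r+1}=o_j\). Because \(W\) is R1-reduced, cyclically adjacent letters are different, so \(i\neq j\).

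\emph{Step 3 (consecutive selections).} By \eqref{eq:candidate-set}, \(\operatorname{Sel}(\alpha^{ij})\) contains both \(o_i\) and \(o_j\). The open interval of \(S^1\) from \(q_r\) to \(q_{r+1}\) contains no occurrence at all, and in particular no selected point. Thus \(o_i,o_j\) are cyclically consecutive in \(\operatorname{Sel}(\alpha^{ij})\), read in the orientation from \(o_i\) to \(o_j\). Lemma~\ref{lem:predecessor} then gives an arrow \(\alpha^{ij}\to\alpha\), so \(\deg^-(\alpha)\ge 1\).

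None of these steps is a real obstacle. The only points needing care are the counting in Step 1, which shows that failure to alternate forces an \(\mathrm{OO}\) pair (a \(\mathrm{UU}\) pair alone would not directly suffice), and the use of R1-reducedness in Step 2 to guarantee \(i\ne j\).
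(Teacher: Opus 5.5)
Your proof is correct and follows the paper's argument. Equal numbers of \(\mathrm U\)- and \(\mathrm O\)-marks force a cyclically adjacent pair \(\mathrm O_i\mathrm O_j\) in any nonalternating state; these two occurrences stay consecutive selections in \(\alpha^{ij}\), so Lemma~\ref{lem:predecessor} gives \(\alpha^{ij}\to\alpha\).

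One small correction concerns Step~2. The lemma does not assume that \(W\) is R1-reduced, and you do not need it for \(i\ne j\). Each crossing has exactly one \(\mathrm O\)-occurrence, so two \(\mathrm O\)-marks can never carry the same label, and the lemma holds for every shadow.
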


\begin{proof}
There are equally many \(\mathrm U\)- and \(\mathrm O\)-marks.  Hence a
nonalternating marking has two cyclically adjacent occurrences
\(\mathrm O_i,\mathrm O_j\), with \(i\ne j\).  They become selected and
remain consecutive in \(\alpha^{ij}\), so
Lemma~\ref{lem:predecessor} gives \(\alpha^{ij}\to\alpha\).
\end{proof}

We now isolate the only possible predecessor of an alternating state on an
R1-reduced word.

\begin{lemma}[Alternating predecessor pattern]
\label{lem:alternating-pattern}
Let \(W\) be an R1-reduced occurrence word of a shadow and let \(\alpha\) be
alternating.  Then \(\deg^-(\alpha)>0\) if and only if \(W_\alpha\) contains
five consecutive occurrences
\begin{equation}\label{eq:five-occurrence}
  \mathrm O_i\,\mathrm U_j\,\mathrm O_k\,
  \mathrm U_i\,\mathrm O_j,
\end{equation}
where \(i,j,k\) are pairwise distinct.  Moreover, every incoming arrow to
\(\alpha\) is reversible.
\end{lemma}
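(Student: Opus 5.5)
The plan is to apply Lemma~\ref{lem:predecessor} directly and read off what "cyclically consecutive" means in an alternating word. Fix distinct labels \(i,j\). By \eqref{eq:candidate-set}, \(\operatorname{Sel}(\alpha^{ij})\) is obtained from \(\operatorname{Sel}(\alpha)\) by deleting \(u_i,u_j\) and adding \(o_i,o_j\). So \(o_i,o_j\) are cyclically consecutive in \(\operatorname{Sel}(\alpha^{ij})\) exactly when one of the two open circle intervals between them contains no point of \(\operatorname{Sel}(\alpha)\setminus\{u_i,u_j\}\). Equivalently, every \(\mathrm U\)-occurrence of \(W_\alpha\) strictly between them, going in one direction, is \(u_i\) or \(u_j\). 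The roles of \(i\) and \(j\) are symmetric, so I will assume the good direction runs from \(o_i\) to \(o_j\).

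Since \(\alpha\) is alternating, the positions strictly between two \(\mathrm O\)-occurrences at cyclic distance \(2d\) contain exactly \(d\) \(\mathrm U\)-marks. These must all lie in \(\{u_i,u_j\}\), which forces \(d\le 2\). I then split into cases.
\begin{itemize}
\item[(a)] \(d=1\). The segment is \(\mathrm O_i\,\mathrm U_x\,\mathrm O_j\) with \(x\in\{i,j\}\). This puts two equal letters cyclically adjacent, which contradicts R1-reducedness.
\item[(b)] \(d=2\). The segment is \(\mathrm O_i\,\mathrm U_a\,\mathrm O_k\,\mathrm U_b\,\mathrm O_j\) with \(\{a,b\}=\{i,j\}\). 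R1-reducedness rules out \(a=i\) (adjacent to \(\mathrm O_i\)) and \(b=j\) (adjacent to \(\mathrm O_j\)). Hence \(a=j\) and \(b=i\), which gives exactly \eqref{eq:five-occurrence}.
\end{itemize}
Each label occurs exactly twice, and \(\mathrm O_i\), \(\mathrm O_j\) are already the unique overcrossing occurrences of \(i\) and \(j\). Therefore \(k\notin\{i,j\}\), so \(i,j,k\) are pairwise distinct. Conversely, if \eqref{eq:five-occurrence} occurs, the only selected points strictly between \(o_i\) and \(o_j\) are \(u_j,u_i\). Thus \(o_i,o_j\) are consecutive in \(\operatorname{Sel}(\alpha^{ij})\), and Lemma~\ref{lem:predecessor} gives \(\alpha^{ij}\to\alpha\). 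Combined with \eqref{eq:indegree}, this proves the equivalence.

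For reversibility, every incoming arrow has the form \(\alpha^{ij}\to\alpha\), and by the case analysis above it comes from a block \eqref{eq:five-occurrence}. The reverse arrow \(\alpha\to\alpha^{ij}=\alpha+e_i+e_j\) exists iff \(u_i,u_j\) bound an arc of \(\alpha\), i.e.\ are cyclically consecutive in \(\operatorname{Sel}(\alpha)\). In the block, \(\mathrm U_j\,\mathrm O_k\,\mathrm U_i\) shows that \(u_j\) and \(u_i\) are separated only by an overcrossing occurrence. So the arc of \(\alpha\) from \(u_j\) to \(u_i\) has endpoint crossings \(j,i\), and changing it yields \(\alpha^{ij}\).

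The only delicate step is the bookkeeping in the case analysis. One must check both traversal directions, which the symmetry handles. One must also confirm that R1-reducedness is used exactly to kill \(d=1\) and to pin down the order in \(d=2\). Beyond that, the argument is a direct unpacking of Lemma~\ref{lem:predecessor}.
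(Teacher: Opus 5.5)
Your proof is correct and follows the paper's own argument. Like the paper, you reduce via Lemma~\ref{lem:predecessor} to an interval between \(o_i\) and \(o_j\) whose only selections are \(u_i,u_j\), then use alternation and R1-reducedness to rule out the one-\(\mathrm U\) case and force the order \((j,i)\) in the two-\(\mathrm U\) case. You also explicitly prove the reversibility clause (from the consecutive selections in \(\mathrm U_j\,\mathrm O_k\,\mathrm U_i\)) and that \(k\notin\{i,j\}\); the paper's proof leaves both implicit, even though it later relies on reversibility in the proof of Theorem~\ref{thm:strong}.
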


\begin{proof}
The displayed block makes \(o_i,o_j\) consecutive after \(u_i,u_j\) are
replaced by their mates, so \(\alpha^{ij}\to\alpha\).  Conversely, suppose
\(\alpha^{ij}\to\alpha\).  By Lemma~\ref{lem:predecessor}, one open interval
between \(o_i\) and \(o_j\) contains no selection other than possibly
\(u_i,u_j\).  Alternation leaves two cases.

If the interval contains one \(\mathrm U\)-mark, its block is
\(\mathrm O_i\mathrm U_x\mathrm O_j\), with \(x=i\) or \(j\); this gives
adjacent equal letters.  If it contains two, its block is
\[
  \mathrm O_i\,\mathrm U_x\,\mathrm O_k\,
  \mathrm U_y\,\mathrm O_j,
  \qquad \{x,y\}=\{i,j\}.
\]
The order \((x,y)=(i,j)\) again gives adjacent equal letters.  Hence
\((x,y)=(j,i)\), which is Equation~\eqref{eq:five-occurrence}. 
\end{proof}

\begin{proof}[proof of Theorem~\ref{thm:intro-source}]
Combining Lemma~\ref{lem:source-alternating} and
Lemma~\ref{lem:alternating-pattern} gives the classification of
Theorem~\ref{thm:intro-source}. Since there are exactly two alternating states, Lemma~\ref{lem:source-alternating} infers the upper bound of the number of source states.
\end{proof}

\subsection{Connecting the non-source states}\label{sec:non-source-scc}

We now prove that the source states are the only obstruction to strong
reachability.  Fix an alternating state \(\delta\), and put
\(X=\{0,\ldots,s-1\}\).  For \(S\subseteq X\), write
\[
  \delta_S=\delta+\sum_{i\in S}e_i,
  \qquad
  L_k=\{\delta_S:|S|=k\}.
\]
The integer \(k\) is the Hamming distance from \(\delta\).  By
the preceding observation, \(L_0=\{\delta\}\),
\(L_s=\{\delta+\one\}\), and \(L_1,\ldots,L_{s-1}\) are the
\emph{internal layers}.

We use the \emph{token digraph} of Fernandes et al.~\cite{FernandesEtAl2026}, the
directed analogue of the token graph of Fabila-Monroy et al.
\cite{FabilaMonroyEtAl2012}.  For a digraph \(D\) and  
\(1\le k<|V(D)|\) where \(V(D)\)  is the vertex set of \(D\), its token digraph \(T_k(D)\) has the \(k\)-subsets of
\(V(D)\) as vertices and an arrow
\[
  S\longrightarrow S\setminus\{x\}\cup\{y\}
\]
whenever \(x\to y\) is an arrow of \(D\), \(x\in S\), and \(y\notin S\).

\begin{definitionx}[Crossing digraph of an alternating state]
Since \(\delta\) is alternating, every adjacent pair in the cyclic word
\(W_\delta\) consists of \(\mathrm O_i\) and \(\mathrm U_j\) for unique
labels \(i,j\in X=\{0,\ldots,s-1\}\).  The \emph{crossing digraph} \(D_\delta\) has vertex
set \(X\) and an arrow \(i\to j\) whenever \(\mathrm O_i\) and
\(\mathrm U_j\) are cyclically adjacent in \(W_\delta\).
\end{definitionx}

The direction is from the \(\mathrm O\)-label to the \(\mathrm U\)-label.
The R1-reduced assumption excludes loops, and no ordered pair is joined by
parallel arrows.  If directions are forgotten while all edges are retained,
\(D_\delta\) is the abstract \(4\)-regular multigraph underlying the shadow
\(G\), whose vertices are the crossings and edges are the semi-arcs. 

For \(1\le k\le s-1\), let \(B_k^\delta\) be the subdigraph of
\(A_G[L_k]\) keeping exactly those arrows of \(A_G\) that come from an
arc crossing change on a semi-arc: the arrow \(\alpha\to\alpha'\) lies
in \(B_k^\delta\) precisely when some arc \(c\) of \(\alpha\) is a
semi-arc and the arc crossing change on \(c\) carries \(\alpha\) to
\(\alpha'\).

\begin{lemma}[Crossing and layer digraphs]\label{lem:strong-layers}
For \(1\le k\le s-1\), the map
\[
  \Phi_k:\{ S : S\subseteq X, |S|=k \} \longrightarrow L_k,
  \qquad S\longmapsto\delta_S,
\]
is a digraph isomorphism
\begin{equation}\label{eq:token-layer-isomorphism}
  T_k(D_\delta)\cong B_k^\delta.
\end{equation}
Moreover, \(D_\delta\) is strongly connected.  Consequently every internal
layer \(L_k\) is strongly connected in \(A_G\).
\end{lemma}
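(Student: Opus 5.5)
The plan is to compute explicitly which arcs of a state \(\delta_S\) are semi-arcs, read off the isomorphism \(\Phi_k\) from that computation, and then obtain strong connectivity from a degree-balance argument.

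\textbf{Semi-arc arcs of \(\delta_S\).} Since \(\delta\) is alternating, every cyclically adjacent pair of positions in \(W_\delta\) carries one mark \(\mathrm O_i\) and one mark \(\mathrm U_j\), in some order. Passing to \(\delta_S\) swaps the two marks of every crossing in \(S\) and changes nothing else. An arc of a state is a semi-arc exactly when its two endpoints are cyclically adjacent positions that are both marked \(\mathrm U\). For the adjacent pair \(\mathrm O_i,\mathrm U_j\) of \(W_\delta\), both marks are \(\mathrm U\) in \(\delta_S\) if and only if \(i\in S\) and \(j\notin S\). By the R1-reduced hypothesis \(i\ne j\), so this semi-arc is a genuine arc with two distinct endpoint crossings. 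The arc crossing change on it turns both endpoint occurrences into \(\mathrm O\). Crossing \(i\) therefore returns to its \(\delta\)-marking and crossing \(j\) becomes switched, so the target is \(\delta_{S\setminus\{i\}\cup\{j\}}\).

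\textbf{The isomorphism \(\Phi_k\).} The computation above says that arrows of \(B_k^\delta\) out of \(\delta_S\) correspond exactly to arrows \(i\to j\) of \(D_\delta\) with \(i\in S\) and \(j\notin S\). These are precisely the token moves of \(T_k(D_\delta)\), and they stay inside \(L_k\). The converse direction is the same computation read backwards: a token move along \(i\to j\) comes from an adjacent pair \(\mathrm O_i\mathrm U_j\) that is a \(\mathrm U\mathrm U\) semi-arc of \(\delta_S\). Clearly \(\Phi_k\) is a bijection onto \(L_k\). Two semi-arcs that yield the same ordered pair \((i,j)\) produce the same arrow, so multiplicities play no role and \eqref{eq:token-layer-isomorphism} follows.

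\textbf{Strong connectivity of \(D_\delta\).} Each label \(i\) has one \(\mathrm O\)-occurrence, whose two cyclic neighbours are \(\mathrm U\)-occurrences; this gives \(i\) out-degree \(2\) when arrows are counted with multiplicity. Likewise its \(\mathrm U\)-occurrence gives in-degree \(2\). Hence the multidigraph is balanced. Its underlying multigraph is the \(4\)-regular graph of the shadow, which is connected because the shadow is the image of one circle. A weakly connected balanced digraph is strongly connected, since it has an Eulerian circuit. Removing duplicate arrows does not affect reachability, so \(D_\delta\) is strongly connected.

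\textbf{Strong connectivity of \(T_k(D_\delta)\).} I would repeat the balance argument at the token level, working with the token multidigraph built from the multidigraph \(D_\delta\). In that multidigraph the out-degree of a vertex \(S\) is the number of arrows from \(S\) to \(X\setminus S\), and its in-degree is the number of arrows from \(X\setminus S\) to \(S\). These two counts are equal because \(D_\delta\) is balanced: summing out-minus-in degree over the vertices of \(S\) gives zero. Weak connectivity of \(T_k\) is the classical connectivity of token graphs of connected graphs~\cite{FabilaMonroyEtAl2012}. It can also be shown directly: to move one \(k\)-set toward another, slide a single token along an undirected path to an unoccupied target vertex, always taking the last token on that path first. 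Hence \(T_k(D_\delta)\) is strongly connected; alternatively one could cite~\cite{FernandesEtAl2026}.

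\textbf{Conclusion and main obstacle.} By the isomorphism, \(B_k^\delta\) is strongly connected. It is a spanning subdigraph of \(A_G[L_k]\), so \(L_k\) is strongly connected in \(A_G\). The step I expect to need the most care is the multiplicity bookkeeping. Parallel adjacencies \(\mathrm O_i\mathrm U_j\) can occur, so the balance argument must be run on multidigraphs and only then collapsed to simple digraphs. The remaining steps are direct.
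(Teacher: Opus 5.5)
Your proposal is correct, and your first two steps follow the paper. The isomorphism is read off, exactly as there, from the adjacent pairs \(\mathrm O_i\mathrm U_j\) of \(W_\delta\) that become \(\mathrm U\mathrm U\) in \(\delta_S\). Strong connectivity of \(D_\delta\) comes from in-degree \(=\) out-degree \(=2\) plus connectivity of the shadow. The paper phrases this via a source SCC having no outgoing arrows rather than via an Eulerian circuit, but it is the same fact.

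The genuine difference is the last step. The paper gets strong connectivity of \(T_k(D_\delta)\) by citing \cite[Corollary~3.4]{FernandesEtAl2026}: for any digraph \(D\), \(T_k(D)\) is strongly connected if and only if \(D\) is. You instead note that a \(k\)-set \(S\) has out-degree \(|E(S,X\setminus S)|\) and in-degree \(|E(X\setminus S,S)|\) in the token digraph. These agree by the same cut identity the paper uses for \(D_\delta\), so \(T_k(D_\delta)\) is itself balanced. Together with weak connectivity of token graphs of connected graphs, this gives strong connectivity.

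What each route buys: yours is self-contained apart from that classical undirected fact, and it proves slightly more, namely that every \(B_k^\delta\) is Eulerian. It does depend on \(D_\delta\) being balanced. The cited corollary needs only strong connectivity of \(D_\delta\), so it would survive without the degree symmetry.

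The multiplicity bookkeeping you single out as the main obstacle is not actually needed. For \(s\ge 2\), \(\mathrm O_i\) and \(\mathrm U_j\) each occupy a single position, so they are cyclically adjacent at most once. Hence \(D_\delta\), and therefore \(T_k(D_\delta)\), has no parallel arrows, as the paper records.
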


\begin{proof}
Fix \(k\).  The map \(\Phi_k\) is a bijection.  Consider a token arrow in $T_k(D_\delta)$
\[
  S\longrightarrow S'=S\setminus\{i\}\cup\{j\}, i\in S, j\notin S, 
\]
induced by \(i\to j\) in \(D_\delta\).  Thus \(\mathrm O_i\) and
\(\mathrm U_j\) are cyclically adjacent in \(W_\delta\).  In state \(\delta_S\), these two
occurrences are selected precisely when \(i\in S\) and \(j\notin S\).
\(\{O_i, U_j\}\) then form consecutive selections of \(\delta_S\).  The resulting  arc is a semi-arc and the arc crossing change gives an arrow in  \(B_k^\delta\) 
\[
  \Phi_k(S)=\delta_S\longrightarrow\delta_{S'}=\Phi_k(S').
\]
Conversely, the endpoints of a semi-arc crossponding to an arrow \(\delta_S\to \delta_{S'}\) in \(B_k^\delta\) are adjacent
occurrences \(\mathrm O_i,\mathrm U_j\).  Their selection by \(\delta_S\) again gives
\(i\in S\), \(j\notin S\), and the corresponding token arrow \(S\to S'\) in \(T_k(D_{\delta})\) replaces
\(i\) by \(j\).  This proves \eqref{eq:token-layer-isomorphism}.

It remains to prove that \(D_\delta\) is strongly connected.  Its underlying
undirected multigraph is the connected shadow \(G\).  Moreover, each vertex
satisfies
\begin{equation}\label{eq:crossing-digraph-degrees}
  \deg^+_{D_\delta}(i)=\deg^-_{D_\delta}(i)=2.
\end{equation}
For any
\(C\subseteq X\), each arrow internal to \(C\) is counted once as an
outgoing arrow and once as an incoming arrow.  Hence
\[
  \sum_{i\in C}\bigl(\deg^+(i)-\deg^-(i)\bigr)
  =|E(C,X\setminus C)|-|E(X\setminus C,C)|,
\]
where \(|E(A,B)|\) is the number of arrows with tail in \(A\) and head
in \(B\).
Now let \(C\) be a source SCC.  The second term is zero, while
Equation~\eqref{eq:crossing-digraph-degrees} makes the left side zero.
Therefore no arrow leaves \(C\).  Since the underlying undirected graph is
connected, \(C=X\). Hence \(D_\delta\) is strongly connected.

Finally, by \cite[Corollary~3.4]{FernandesEtAl2026} the token digraph
\(T_k(D_\delta)\) is strongly connected if and only if \(D_\delta\) is
strongly connected, which we have just shown.  Hence its isomorphic copy
\(B_k^\delta\), and therefore \(A_G[L_k]\), is strongly connected.
\end{proof}

\begin{lemma}[Bridges between layers]\label{lem:layer-bridges}
Suppose \(s>3\).  For \(1\le k\le s-3\), the layers \(L_k\) and
\(L_{k+2}\) lie in the same SCC of \(A_G\).
\end{lemma}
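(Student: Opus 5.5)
The plan is to use that each internal layer is already strongly connected in \(A_G\) by Lemma~\ref{lem:strong-layers}. Since \(1\le k<k+2\le s-1\), both \(L_k\) and \(L_{k+2}\) are internal. So it suffices to exhibit one arrow of \(A_G\) from some state of \(L_k\) to some state of \(L_{k+2}\), and one arrow from some state of \(L_{k+2}\) back to some state of \(L_k\). Any such arrow flips exactly two coordinates, so I will look for arc crossing changes at crossings \(a,b\) that are both outside, respectively both inside, the current set \(S\).

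First I record which occurrences are selected in \(\delta_S\). If \(i\notin S\), the selected occurrence of \(i\) is the one marked \(\mathrm U\) in \(W_\delta\). If \(i\in S\), it is the one marked \(\mathrm O\) in \(W_\delta\).

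\textbf{Upward arrow.} Since \(\delta\) is alternating, \(W_\delta\) contains a cyclic block \(\mathrm U_a\,\mathrm O_x\,\mathrm U_b\).
\begin{itemize}
\item Each label has exactly one \(\mathrm U\)-occurrence, so \(a\ne b\). R1-reducedness gives \(x\ne a\) and \(x\ne b\), so \(a,b,x\) are pairwise distinct.
\item Choose \(S\subseteq X\setminus\{a,b,x\}\) with \(|S|=k\). This is possible exactly because \(k\le s-3\).
\item In \(\delta_S\), the occurrences \(\mathrm U_a\) and \(\mathrm U_b\) are selected, while \(\mathrm O_x\) is not, since \(x\notin S\). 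Hence \(\mathrm U_a,\mathrm U_b\) are cyclically consecutive selections and bound an arc of \(\delta_S\).
\item The arc crossing change on that arc gives \(\delta_S\to\delta_S+e_a+e_b=\delta_{S\cup\{a,b\}}\in L_{k+2}\).
\end{itemize}

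\textbf{Downward arrow.} Symmetrically, \(W_\delta\) contains a block \(\mathrm O_a\,\mathrm U_x\,\mathrm O_b\) with \(a,b,x\) pairwise distinct, by the same reasoning.
\begin{itemize}
\item Choose \(T\supseteq\{a,b,x\}\) with \(|T|=k+2\). This is possible since \(3\le k+2\le s-1\).
\item In \(\delta_T\), the occurrences \(\mathrm O_a\) and \(\mathrm O_b\) are selected, because \(a,b\in T\). The occurrence \(\mathrm U_x\) is not selected, because \(x\in T\). So \(\mathrm O_a,\mathrm O_b\) bound an arc of \(\delta_T\).
\item Changing that arc gives \(\delta_T\to\delta_{T\setminus\{a,b\}}\in L_k\).
\end{itemize}

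Combining the two arrows with the strong connectivity of \(A_G[L_k]\) and \(A_G[L_{k+2}]\) puts both layers in one SCC.

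The only delicate point is the distinctness of \(a,b,x\), which ensures \(S\) or \(T\) can be chosen with the required size. It follows from R1-reducedness together with the fact that each label carries exactly one \(\mathrm U\) and one \(\mathrm O\) in \(W_\delta\). The hypothesis \(k\le s-3\) is used precisely to have room to avoid three labels in the upward step.
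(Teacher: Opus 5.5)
Your proof is correct and essentially identical to the paper's. Both take an upward arrow from a block \(\mathrm U_a\,\mathrm O_x\,\mathrm U_b\) with \(S\) disjoint from the three labels, and a downward arrow from a block \(\mathrm O_a\,\mathrm U_x\,\mathrm O_b\) in \(\delta_T\) with \(T\supseteq\{a,b,x\}\); the paper writes this \(T\) as a \((k-1)\)-set together with the three labels. Strong connectivity of the internal layers from Lemma~\ref{lem:strong-layers} then finishes the argument.
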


\begin{proof}
Choose three consecutive occurrences
\[
  \mathrm U_a\,\mathrm O_b\,\mathrm U_c
\]
in \(W_\delta\).  Their labels are pairwise distinct: adjacent labels are
distinct because \(W\) is R1-reduced, and \(a\ne c\) because a label has
only one \(\mathrm U\)-occurrence.  Choose a \(k\)-set \(S\) disjoint
from \(\{a,b,c\}\).  In \(\delta_S\), the occurrences
\(\mathrm U_a,\mathrm U_c\) are selected and bound an arc, so
\[
  \delta_S\longrightarrow\delta_{S\cup\{a,c\}}.
\]
This gives an arrow from \(L_k\) to \(L_{k+2}\).

For the reverse direction, choose consecutive occurrences
\[
  \mathrm O_a\,\mathrm U_b\,\mathrm O_c
\] in  \(W_\delta\). With the same argument, their labels are pairwise distinct.
Take a \((k-1)\)-set \(T\) disjoint from \(\{a,b,c\}\).  In
\(\delta_{T\cup\{a,b,c\}}\), the occurrences
\(\mathrm O_a,\mathrm O_c\) are selected and bound an arc.  Hence
\[
  \delta_{T\cup\{a,b,c\}}
  \longrightarrow
  \delta_{T\cup\{b\}},
\]
an arrow from \(L_{k+2}\) to \(L_k\).  Lemma~\ref{lem:strong-layers}
completes the proof.
\end{proof}

\begin{proof}[Proof of Theorem~\ref{thm:strong}]
Lemma~\ref{lem:layer-bridges} connect all internal layers whose indices have the same
parity.  Since
\[
  p(\delta_S)=p(\delta)+|S|\pmod2,
\]
all internal-layer states of a prescribed state parity lie in one SCC
\(C_\varepsilon\).
If an alternating state \(\alpha\in V_\varepsilon\) is not a source, choose
an incoming arrow \(\gamma\to\alpha\).  By
Lemma~\ref{lem:alternating-pattern}, \(\alpha\to\gamma\).  Relative to
\(\delta\), the state \(\gamma\) lies in \(L_2\) if \(\alpha=\delta\), and
in \(L_{s-2}\) if \(\alpha=\delta+\one\).  Both are internal because
\(s>3\).  Thus \(\alpha\in C_\varepsilon\).  This proves
\(N_\varepsilon=C_\varepsilon\).

A source $\alpha$ has no arrow
enters it.  It cannot share an SCC with any other vertex, and therefore a
singleton SCC.
Every successor \(x\) of $\alpha$ has positive indegree and hence is not a source, and
\(p(\alpha)=p(x)\).   
Since an arc
crossing change on an alternating state alters only two components and, for
\(s>3\), cannot produce a state differing from \(\alpha\) in all \(s\) components, i.e. $x\neq \alpha, \alpha+\one$. 
The source state $\alpha$  is alternating by
Lemma~\ref{lem:source-alternating}. Hence \(x\) is not alternating.
Just as above, it follows that \(x\in L_2\cup L_{s-2}\), and therefore
\(x\in N_\varepsilon\), where \(\varepsilon=p(\alpha)\).  Thus every source component is a singleton that
maps onto \(N_\varepsilon\), which gives the asserted condensation arrows.

\end{proof}

\subsection{Low-crossing shadows and corollaries}\label{sec:knot-interpretation}

\begin{remark}[Low-crossing shadows]\label{rem:low-crossings}
For \(s=1,2\), every classical shadow has an R1 curl and every resulting
diagram represents the unknot.  For \(s=3\), the classical low-crossing
classification gives a unique R1-reduced shadow, the trefoil shadow with
occurrence word \(012012\)~\cite{DowkerThistlethwaite1983}.  Its
two alternating states represent the right- and left-handed trefoils; the
other six states represent the unknot.  Directly from
Lemma~\ref{lem:alternating-pattern}, it has no source state and each parity class is
an SCC of size four.  The results below concern the standing case \(s>3\).
\end{remark}

\begin{corollary}[Exactly two weak components]\label{cor:weak}
The weak components of \(A_G\) are \(V_0\) and \(V_1\), each with
\(2^{s-1}\) states.
\end{corollary}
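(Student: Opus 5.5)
The plan is to read the corollary off from Theorem~\ref{thm:strong}, together with the parity invariance recorded just before it. There are two halves. First, no edge of \(\uAG\) joins \(V_0\) to \(V_1\). Second, each \(V_\varepsilon\) is connected in \(\uAG\). The count \(|V_\varepsilon|=2^{s-1}\) is immediate, because \(p\colon\F^s\to\F\) is a nonzero linear functional when \(s\ge 1\), so each fibre has \(2^{s-1}\) elements.

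For the first half, every arrow of \(A_G\) has the form \(\alpha\to\alpha+e_i+e_j\) with \(i\ne j\) by \eqref{eq:move}. Such an arrow preserves \(p\). The edges of \(\uAG\) are exactly the arrows with their directions forgotten, so each weak component lies inside a single \(V_\varepsilon\).

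For the second half, fix \(\varepsilon\). By Theorem~\ref{thm:strong}, \(N_\varepsilon\) is strongly connected and hence weakly connected. Every \(\alpha\in S_\varepsilon\) has an arrow into \(N_\varepsilon\); indeed, each state has outdegree \(s>0\), and the proof of Theorem~\ref{thm:strong} shows that every successor of a source lies in \(N_\varepsilon\). Hence \(V_\varepsilon=N_\varepsilon\cup S_\varepsilon\) is connected in \(\uAG\). One point must be checked: that \(N_\varepsilon\) is nonempty. This holds because \(|S_\varepsilon|\le 2\), while \(|V_\varepsilon|=2^{s-1}\ge 8\) for \(s>3\). The successors of a source also directly witness that \(N_\varepsilon\) is nonempty.

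The argument has no real obstacle. The only thing to watch is that the corollary implicitly assumes the standing hypotheses, namely that \(W\) is R1-reduced and \(s>3\), since Theorem~\ref{thm:strong} is being invoked. I would state this at the start of the proof.
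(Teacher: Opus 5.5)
Your proof is correct and follows the paper's own argument. Parity invariance separates \(V_0\) from \(V_1\), and within each class \(N_\varepsilon\) is strongly connected by Theorem~\ref{thm:strong} while every source has an arrow into it; your extra checks (the fibre count \(2^{s-1}\), the nonemptiness of \(N_\varepsilon\), and the standing hypotheses that \(W\) is R1-reduced with \(s>3\)) are valid details that the paper leaves implicit.
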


\begin{proof}
Parity separates the two sets.  Within \(V_\varepsilon\), the states in
\(N_\varepsilon\) are strongly connected, and every source state has an
arrow into \(N_\varepsilon\).
\end{proof}

\begin{corollary}[Size and parity]\label{cor:source-sink-parity}
If \(r\) is the number of source states, then \(r\le2\) and
\(\operatorname{SC}(A_G)\) has \(2+r\) vertices.  Its two sink vertices have sizes
\[
  |N_\varepsilon|=2^{s-1}-|S_\varepsilon|.
\]
If both sources exist, they have the same parity when \(s\) is even and
opposite parities when \(s\) is odd.
\end{corollary}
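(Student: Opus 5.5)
The plan is to read everything off Theorem~\ref{thm:strong} and the source classification, together with a direct parity count for the two alternating states. First, $r\le 2$: by Lemma~\ref{lem:source-alternating} every source state is alternating. Once the first/second convention is fixed, a marking of $W$ is determined by which occurrence of each label is marked $\mathrm U$. A cyclically alternating marking is fixed by the mark at position $0$, so there are exactly two alternating states, namely $\delta$ and $\delta+\one$. Hence $r=|S_0|+|S_1|\le 2$.

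Next, the vertices of $\operatorname{SC}(A_G)$. By Theorem~\ref{thm:strong}, the SCCs inside $V_\varepsilon$ are $N_\varepsilon$ together with the singletons $\{\alpha\}$ for $\alpha\in S_\varepsilon$. By Corollary~\ref{cor:weak}, no SCC meets both parity classes. So the vertex count is $2+r$, provided both $N_0$ and $N_1$ are nonempty. This holds because $|V_\varepsilon|=2^{s-1}\ge 8>2\ge|S_\varepsilon|$ for $s>3$. The sizes then follow from $N_\varepsilon=V_\varepsilon\setminus S_\varepsilon$ and $|V_\varepsilon|=2^{s-1}$, which gives $|N_\varepsilon|=2^{s-1}-|S_\varepsilon|$. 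These two are the sink vertices: Theorem~\ref{thm:strong} says $N_\varepsilon$ is the unique sink in its parity class, and since no arrows cross parity classes, they are exactly the sinks of the whole condensation.

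For the parity statement, suppose both sources exist. They are alternating and distinct, so they are $\delta$ and $\delta+\one$. Then
\[
  p(\delta+\one)=p(\delta)+s \pmod 2,
\]
so the two sources have equal parity exactly when $s$ is even, and opposite parity exactly when $s$ is odd.

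None of these steps is a real obstacle. The only point needing care is the claim that there are exactly two alternating states. Alternation fixes the mark of every position from the mark at position $0$. The resulting marking must give each label exactly one $\mathrm U$, which is the same as requiring the two occurrences of each label to lie at positions of opposite parity. This holds for every classical (planar) Gauss word, by the Gauss parity condition. In any case the corollary only needs the upper bound of two alternating states, which is immediate from the mark at position $0$ determining everything.
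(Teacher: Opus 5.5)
Your proposal is correct and follows the paper's argument: everything except the parity assertion is read off Theorem~\ref{thm:strong}, and the parity claim comes from \(p(\delta+\one)=p(\delta)+s\) for the two complementary alternating states. You also make explicit two checks the paper leaves implicit, namely that \(N_\varepsilon\neq\varnothing\) and that there are at most two alternating states.
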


\begin{proof}
Only the parity assertion remains after Theorem~\ref{thm:strong}.  The two
possible sources are the complementary alternating states \(\alpha\) and
\(\alpha+\one\), whose parities differ by \(s\).
\end{proof}

\begin{figure}[ht]
\centering
\begin{tikzpicture}[
  >=Stealth,
  component/.style={draw,rounded corners,minimum width=17mm,
    minimum height=7mm,inner sep=2pt},
  every node/.style={font=\small}
]
  \begin{scope}[shift={(-3.3,0)}]
    \node at (0,1.05) {(a) \(S_\varepsilon=\varnothing\)};
    \node[component,fill=RoyalBlue!9] (n0) at (0,0) {\(N_\varepsilon\)};
  \end{scope}
  \begin{scope}[shift={(2.5,0)}]
    \node at (0,1.05) {(b) \(1\le |S_\varepsilon|\le2\)};
    \node[component,fill=BrickRed!8] (s1) at (-1.45,0.35)
      {\(\{\alpha_1\}\)};
    \node at (-1.45,-0.28) {\(\vdots\)};
    \node[component,fill=BrickRed!8] (sr) at (-1.45,-0.9)
      {\(\{\alpha_r\}\)};
    \node[component,fill=RoyalBlue!9] (n1) at (1.45,-0.28)
      {\(N_\varepsilon\)};
    \draw[thick,-{Stealth}] (s1.east)--(n1.north west);
    \draw[thick,-{Stealth}] (sr.east)--(n1.south west);
  \end{scope}
\end{tikzpicture}
\caption{The two possible forms of one parity component of
\(\operatorname{SC}(A_G)\), where \(r=|S_\varepsilon|\).}
\label{fig:condensation-shape}
\end{figure}

\subsection{Examples with zero, one, and two sources}

The following examples realize all three possibilities allowed by
Corollary~\ref{cor:source-sink-parity}.

\begin{example}[Zero source states]\label{ex:no-source}
The connected sum of two trefoil shadows has occurrence word
\[
  W=012012345345.
\]
Its two alternating marked words contain, respectively, the forbidden blocks
\[
  \mathrm O_1\mathrm U_2\mathrm O_0\mathrm U_1\mathrm O_2,
  \qquad
  \mathrm O_0\mathrm U_1\mathrm O_2\mathrm U_0\mathrm O_1.
\]
Thus neither alternating state is a source.  The two parity classes are SCCs
of size \(32\), so \(\operatorname{SC}(A_G)\) consists of two isolated vertices.
\end{example}

\begin{figure}[ht]
\centering
\begin{minipage}[c]{.52\textwidth}
  \centering
  \begin{tikzpicture}[
    sink/.style={draw,rounded corners,fill=RoyalBlue!9,
      minimum width=23mm,minimum height=10mm,align=center},
    every node/.style={font=\small}
  ]
    \node[sink] at (-1.55,0) {\(N_0\)\\[-2pt]{\scriptsize size (32)}};
    \node[sink] at (1.55,0) {\(N_1\)\\[-2pt]{\scriptsize size (32)}};
  \end{tikzpicture}
\end{minipage}
\caption{The condensation digraph  without
source}
\label{fig:no-source-condensation}
\end{figure}

\begin{example}[One source state]\label{ex:one-source}
Start with the trefoil word \(012012\).  Insert the figure-eight block
\(34564365\) between the second \(1\) and the final \(2\), and insert a
second figure-eight block \(7,8,9,10,8,7,10,9\) between that \(2\) and the
cyclic initial \(0\).  It gives the R1-reduced word
\[
\begin{aligned}
W={}&(0,1,2,0,1,3,4,5,6,4,3,6,5,2,\\
      &\qquad 7,8,9,10,8,7,10,9).
\end{aligned}
\]
Its alternating states are
\[
  \alpha=(0,1,0,1,0,1,0,0,1,0,1)
  \qquad\text{and}\qquad \alpha+\one.
\]
The marked word \(W_\alpha\) has no forbidden block from
Theorem~\ref{thm:intro-source}, whereas
\(W_{\alpha+\one}\) begins with
\[
  \mathrm O_0\,\mathrm U_1\,\mathrm O_2\,
  \mathrm U_0\,\mathrm O_1.
\]
Thus \(\alpha\) is the unique source.  Since \(p(\alpha)=1\), the three SCCs
have sizes \(1,1023,1024\), with \(\{\alpha\}\to N_1\) and \(N_0\)
isolated in the condensation digraph.
\end{example}

\begin{figure}[ht]
\centering
\begin{minipage}[c]{.39\textwidth}
  \centering
  \begin{tikzpicture}[
    >=Stealth,
    source/.style={draw,rounded corners,fill=BrickRed!8,
      minimum width=18mm,minimum height=9mm,align=center},
    sink/.style={draw,rounded corners,fill=RoyalBlue!9,
      minimum width=22mm,minimum height=10mm,align=center},
    every node/.style={font=\small}
  ]
    \node[source] (a) at (-1.25,.72) {\(\{\alpha\}\)\\[-2pt]
      {\scriptsize size (1)}};
    \node[sink] (n1) at (1.35,.72) {\(N_1\)\\[-2pt]
      {\scriptsize size (1023)}};
    \node[sink] (n0) at (.05,-.82) {\(N_0\)\\[-2pt]
      {\scriptsize size (1024)}};
    \draw[thick,-{Stealth}] (a.east)--(n1.west);
  \end{tikzpicture}
\end{minipage}
\caption{The condensation digraph with only one source}
\label{fig:one-source-condensation}
\end{figure}

\begin{example}[Two source states]\label{ex:figure-eight}
For the figure-eight shadow, one occurrence word is
\(W=01231032\), equivalent to the word in
\cite[Example~4.1 and Figure~10]{ChengLiaoSong2025}.  Its SCCs have sizes
\(1,1,6,8\).  The two singleton sources have the same parity and point to the
six-state sink; the other parity class is the eight-state SCC.
\end{example}

\begin{figure}[!ht]
\centering
\begin{minipage}[c]{.67\textwidth}
  \centering
  \begin{tikzpicture}[
    >=Stealth,
    source/.style={draw,rounded corners,fill=BrickRed!8,
      minimum width=18mm,minimum height=8mm,align=center},
    sink/.style={draw,rounded corners,fill=RoyalBlue!9,
      minimum width=21mm,minimum height=9mm,align=center},
    every node/.style={font=\small}
  ]
    \node[source] (a) at (0,.72) {\(\{\alpha\}\)\\[-2pt]
      {\scriptsize size (1)}};
    \node[source] (abar) at (0,-.72) {\(\{\alpha+\one\}\)\\[-2pt]
      {\scriptsize size (1)}};
    \node[sink] (ne) at (2.7,0) {\(N_0\)\\[-2pt]
      {\scriptsize size (6)}};
    \node[sink] (no) at (5.55,0) {\(N_1\)\\[-2pt]
      {\scriptsize size (8)}};
    \draw[thick,-{Stealth}] (a.east)--(ne.north west);
    \draw[thick,-{Stealth}] (abar.east)--(ne.south west);
  \end{tikzpicture}
\end{minipage}
\caption{The condensation digraph with two sources}
\label{fig:figure-eight-condensation}
\end{figure}

\FloatBarrier

\end{document}